\documentclass[11pt]{amsart}

\usepackage{amsmath,amssymb,amsthm}
\usepackage{graphicx}
\usepackage[numbers]{natbib}
\usepackage{hyperref}
\hypersetup{colorlinks=true, linkcolor=blue, citecolor=blue, urlcolor=blue}

\newtheorem{theorem}{Theorem}
\newtheorem{lemma}[theorem]{Lemma}
\newtheorem{proposition}[theorem]{Proposition}
\newtheorem{corollary}[theorem]{Corollary}
\theoremstyle{definition}
\newtheorem{definition}[theorem]{Definition}

\theoremstyle{remark}

\begin{document}

\title{Truncated Plethystic Exponentials Preserve Power Sum Constraints}
\author{Yogesh Phalak}
\address{Department of Mechanical Engineering, Virginia Tech, Blacksburg, VA 24061}
\email{yphalak@vt.edu}
\date{\today}

\begin{abstract}
Given an arbitrary sequence $(\alpha_1, \ldots, \alpha_n) \in \mathbb{C}^n$, we show that the degree-$n$ truncation of the formal exponential $\exp\bigl(-\sum_{k=1}^{\infty} \frac{\alpha_k}{k} x^k\bigr)$ produces a polynomial whose roots $\rho_1, \ldots, \rho_n$ satisfy $\sum_{i=1}^n \rho_i^{-k} = \alpha_k$ exactly for $k = 1, \ldots, n$. This truncation-exactness property is an algebraic identity in the ring of formal power series, proved by coefficient matching. It defines a natural embedding of sequences into multisets of complex numbers and yields an $O(n^2)$ algorithm for computing the polynomial from the prescribed power sums. We apply the result to the polylogarithm family $\alpha_k = k^{1-s}$, where the associated exponential $\exp(-\mathrm{Li}_s(x))$ produces factorial-integer coefficient sequences for $s \leq 0$ and encodes values of the Riemann zeta function through $\lim_{n\to\infty} P_n^{(s)}(1) = \exp(-\zeta(s))$ for $\mathrm{Re}(s) > 1$.
\end{abstract}

\subjclass[2020]{Primary 11B83; Secondary 05E05, 11M06}
\keywords{Power sums, plethystic exponential, Newton's identities, polylogarithm, Riemann zeta function}

\maketitle

\section{Introduction}

The study of power sums over roots of polynomials has a long history, from Newton's identities \cite{NewtonIdentities} to modern applications in symmetric function theory \cite{MacdonaldSymmetric, StanleyEC2}. A natural inverse problem arises: given a prescribed sequence $(\alpha_1, \alpha_2, \ldots)$, construct a polynomial whose roots realize these values as power sums. The plethystic exponential provides an elegant solution in the infinite setting \cite{Plethystic}. For a sequence $\boldsymbol{\alpha} = (\alpha_k)_{k\geq 1}$, the generating function

\[
    f(x) = \exp\!\left(-\sum_{k=1}^{\infty} \frac{\alpha_k}{k}\, x^k\right) = \prod_{i=1}^{\infty}(1 - x/\rho_i)
\]

produces roots $\{\rho_i\}$ satisfying $\sum_i \rho_i^{-k} = \alpha_k$ for all $k \geq 1$---this is Newton's identities recast in generating function form \cite{LangAlgebra}. For both computational and theoretical purposes, one needs \emph{finite} polynomial approximations. The natural approach is to truncate $f(x)$ to degree $n$. The question is whether truncation destroys the power sum property. The main result of this paper answers this precisely: truncation preserves the power sum constraints \emph{exactly} for the first $n$ terms. This is an algebraic identity, not an approximation or an asymptotic statement.

\begin{theorem}[Truncation-Exactness]\label{thm:main}

Let $(\alpha_1, \ldots, \alpha_n) \in \mathbb{C}^n$ be arbitrary. Define

\[
    P_n(x) = \left[\exp\!\left(-\sum_{k=1}^{\infty} \frac{\alpha_k}{k}\, x^k\right)\right]_{\deg \leq n},
\]

where $[\,\cdot\,]_{\deg \leq n}$ denotes truncation to degree $n$. If $\rho_1, \ldots, \rho_n$ are the roots of $P_n(x)$ counted with multiplicity, then

\[
    \sum_{i=1}^{n} \rho_i^{-k} = \alpha_k \qquad \text{exactly, for } k = 1, 2, \ldots, n.
\]

\end{theorem}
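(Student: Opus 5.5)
The plan is to pass to reciprocal roots and compare logarithms in $\mathbb{C}[[x]]$. First I record that $P_n(0)=1$, since the exponential has constant term $1$. So $0$ is never a root. Write $P_n(x)=1+c_1x+\cdots+c_nx^n$ and consider the reversed polynomial
\[
Q(y)=y^nP_n(1/y)=y^n+c_1y^{n-1}+\cdots+c_n .
\]
It is monic of degree exactly $n$, so it factors as $Q(y)=\prod_{i=1}^n(y-\beta_i)$. Reversing back gives the exact identity $P_n(x)=\prod_{i=1}^n(1-\beta_i x)$. The nonzero $\beta_i$ are precisely the $\rho_i^{-1}$, with multiplicity. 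If $c_n=0$, the degree of $P_n$ drops, and the missing roots correspond to $\beta_i=0$, i.e.\ roots ``at infinity'' contributing $\rho_i^{-k}=0$. I will state this convention explicitly, since it is the only way the count of $n$ roots makes sense in the degenerate case. The claim then becomes $p_k(\beta):=\sum_i\beta_i^k=\alpha_k$ for $1\le k\le n$.

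Next I compute the logarithm of the factored form. Because $P_n$ has constant term $1$, the formal logarithm is defined, and
\[
\log P_n(x)=\sum_{i=1}^n\log(1-\beta_ix)=-\sum_{k\ge1}\frac{p_k(\beta)}{k}x^k .
\]
This is the generating-function form of Newton's identities.

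The key step is the truncation. Set $f(x)=\exp\bigl(-\sum_k \alpha_k x^k/k\bigr)$; only $\alpha_1,\dots,\alpha_n$ affect coefficients up to degree $n$, so the unspecified higher $\alpha_k$ are irrelevant. By definition $f=P_n+R$ with $R\in x^{n+1}\mathbb{C}[[x]]$. Since $P_n$ is a unit in $\mathbb{C}[[x]]$, I can write $f=P_n\,(1+R/P_n)$, where $R/P_n\in x^{n+1}\mathbb{C}[[x]]$. Then
\[
\log f=\log P_n+\log(1+R/P_n),
\]
and the last term lies in $x^{n+1}\mathbb{C}[[x]]$. The additivity of the formal logarithm on series with constant term $1$ is standard. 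Comparing coefficients of $x^k$ for $k\le n$ in $-\sum\alpha_kx^k/k=-\sum p_k(\beta)x^k/k+O(x^{n+1})$ gives $p_k(\beta)=\alpha_k$, as required.

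I expect the main obstacle to be bookkeeping rather than algebra. The first issue is the possible degree drop $c_n=0$, which the reversed-polynomial device handles. The second is justifying that truncation error of order $x^{n+1}$ in $f$ yields error of order $x^{n+1}$ in $\log f$; this follows from the unit argument above. As an alternative to logarithms, I could verify the claim via Newton's identities, $kc_k+\sum_{j=1}^{k}p_jc_{k-j}=0$. These identities hold for the coefficients of $P_n$ with $p_j(\beta)$, and also for the coefficients of $f$ with $\alpha_j$, obtained by differentiating $f'=-f\sum\alpha_kx^{k-1}$. An induction on $k\le n$ then shows $p_k=\alpha_k$ because $c_0=1$.
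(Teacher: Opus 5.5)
Your proof is correct and follows essentially the same route as the paper: factor $P_n$ into linear factors, take the formal logarithm to get $-\sum_k p_k x^k/k$, and compare with $h$ through degree $n$. Two points in your version go beyond the paper. Your factorization $f = P_n(1+R/P_n)$ justifies a step the paper only asserts, namely $[\log[\exp h]_{\deg\le n}]_{\deg\le n} = [h]_{\deg\le n}$. Your reversed-polynomial convention also covers the degree-drop case $a_n = 0$, which Lemma~\ref{lem:factor} tacitly excludes even though it genuinely occurs (e.g.\ $P_n = 1-x$ for $s=1$).
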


The key point is that we truncate the \emph{output} (the polynomial), not the \emph{input} (the log-generating series). The proof proceeds by coefficient matching in the formal power series \cite{StanleyEC2} and is given in Section~\ref{sec:proof}. Theorem~\ref{thm:main} also defines an embedding

\[
    \Phi_n : \mathbb{C}^n \longrightarrow \{\text{multisets of } n \text{ points in } \mathbb{C}\},
\]

sending $(\alpha_1, \ldots, \alpha_n)$ to the roots of $P_n(x)$, showing that every finite sequence of complex numbers is realizable as the first $n$ negative power sums of a degree-$n$ polynomial.

The paper is organized as follows. Section~\ref{sec:prelim} fixes notation. Section~\ref{sec:proof} proves Theorem~\ref{thm:main} and develops the embedding and coefficient recurrence. Section~\ref{sec:polylog} examines the polylogarithm family $\alpha_k = k^{1-s}$, connecting to $\exp(-\mathrm{Li}_s(x))$ and values of the Riemann zeta function \cite{TitchmarshZeta, HardyWright}. Section~\ref{sec:discussion} concludes with open questions.

\section{Preliminaries}\label{sec:prelim}

\begin{definition}[Log-generating function]
Given $\boldsymbol{\alpha} = (\alpha_k)_{k \geq 1}$ with $\alpha_k \in \mathbb{C}$, the \emph{log-generating function} is

\[
    g_{\boldsymbol{\alpha}}(x) = \sum_{k=1}^{\infty} \frac{\alpha_k}{k}\, x^k.
\]

\end{definition}

\begin{definition}[Associated exponential and truncated polynomial]
The \emph{associated exponential} is $f_{\boldsymbol{\alpha}}(x) = \exp(-g_{\boldsymbol{\alpha}}(x))$, and its \emph{degree-$n$ truncation} is

\[
    P_n(x) = [f_{\boldsymbol{\alpha}}(x)]_{\deg \leq n} = \sum_{j=0}^{n} a_j x^j,
\]

where $a_j = [x^j]\, f_{\boldsymbol{\alpha}}(x)$ denotes the coefficient of $x^j$ in the power series expansion.
\end{definition}

\section{Proof of the Main Theorem}\label{sec:proof}

We prove Theorem~\ref{thm:main} via coefficient matching in formal power series \cite{StanleyEC2}.

\begin{lemma}[Factored form]\label{lem:factor}
Let $P_n(x) = \sum_{j=0}^n a_j x^j$ with $a_0 = 1$ and roots $\rho_1, \ldots, \rho_n$ counted with multiplicity. Then all roots are nonzero and 

\[
    P_n(x) = \prod_{i=1}^{n} \left(1 - \frac{x}{\rho_i}\right).
\]

\end{lemma}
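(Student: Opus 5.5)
The plan is to combine the fundamental theorem of algebra with one evaluation at $x=0$. The statement implicitly assumes that $P_n$ has exactly $n$ roots counted with multiplicity, i.e.\ that $a_n \neq 0$ so that $\deg P_n = n$. I will state this hypothesis explicitly at the start. If $a_n = 0$, the list $\rho_1,\ldots,\rho_n$ has fewer than $n$ genuine members, and the lemma should be read with the understanding that the "missing" roots are at infinity ($1/\rho_i = 0$). In that case the same argument applies to the actual degree.

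First, since $a_n \neq 0$, the fundamental theorem of algebra over $\mathbb{C}$ gives
\[
P_n(x) = a_n \prod_{i=1}^{n} (x - \rho_i).
\]
Next, all roots are nonzero: $P_n(0) = a_0 = 1 \neq 0$, so $0$ is not a root. Evaluating the factorization at $x = 0$ then gives
\[
1 = a_n \prod_{i=1}^n (-\rho_i),
\]
hence $a_n = \prod_{i=1}^n (-\rho_i)^{-1}$. Substituting this back,
\[
P_n(x) = \prod_{i=1}^n \frac{x - \rho_i}{-\rho_i} = \prod_{i=1}^n \left(1 - \frac{x}{\rho_i}\right),
\]
which is the claimed factored form.

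Every computational step is routine. The only real obstacle is the degeneracy $a_n = 0$, which can genuinely occur for some choices of $(\alpha_1,\ldots,\alpha_n)$. My approach is to handle it by hypothesis, or by the convention of roots at infinity described above, rather than by any additional argument. That convention is also the one needed later: in the main theorem, the inverses $\rho_i^{-1}$ remain meaningful and are exactly the roots of the reversed polynomial $x^n P_n(1/x)$.
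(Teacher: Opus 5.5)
Your proof is correct and follows the paper's own argument: factor $P_n$ over $\mathbb{C}$ with a leading constant, then evaluate at $x=0$ using $a_0=1$ to identify that constant as $\prod_i(-\rho_i)^{-1}$. Your explicit treatment of the case $a_n=0$, via the convention $1/\rho_i=0$ for roots at infinity, is a worthwhile addition, since the paper tacitly assumes $\deg P_n = n$ even though this degeneracy occurs in its own examples (e.g.\ $s=1$, where $P_n(x)=1-x$ for every $n$).
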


\begin{proof}

Since $a_0 = P_n(0) = 1 \neq 0$, no root is zero. Writing $P_n(x) = c\prod_{i=1}^n(x - \rho_i)$ and evaluating at $x = 0$ gives $c \cdot (-1)^n \prod_i \rho_i = 1$, which yields the stated factored form.
\end{proof}

\begin{lemma}[Logarithm of factored form]\label{lem:log}
In the ring of formal power series, we have 

\[
    \log P_n(x) = -\sum_{k=1}^{\infty} \frac{p_{-k}}{k}\, x^k,
\]

where $p_{-k} = \sum_{i=1}^n \rho_i^{-k}$ denotes the $k$-th negative power sum of the roots.
\end{lemma}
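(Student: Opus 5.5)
The plan is to start from the factored form supplied by Lemma~\ref{lem:factor}, namely $P_n(x)=\prod_{i=1}^n(1-x/\rho_i)$, where every $\rho_i\neq 0$. Each factor has constant term $1$, so its formal logarithm is well defined in $\mathbb{C}[[x]]$ and is given by the standard series
\[
  \log\!\left(1-\frac{x}{\rho_i}\right) = -\sum_{k=1}^{\infty}\frac{\rho_i^{-k}}{k}\,x^k .
\]
Summing these over $i$ and exchanging the two finite-per-coefficient sums gives the coefficient of $x^k$ as $-\frac1k\sum_i\rho_i^{-k}=-p_{-k}/k$. This is exactly the claimed formula.

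The one point that needs care is the identity $\log(FG)=\log F+\log G$ for formal power series with constant term $1$, which is what lets the logarithm of the product split into a sum. I would justify it through the characterization $\log F=\int_0^x F'/F$, that is, as the unique series with zero constant term whose derivative is $F'/F$. The logarithmic derivative is additive, $(FG)'/(FG)=F'/F+G'/G$, and both sides of the identity have zero constant term, so the identity follows. Alternatively, one can note that $\exp$ and $\log$ are mutually inverse bijections between series with constant term $0$ and series with constant term $1$, and that $\exp(A+B)=\exp A\exp B$ holds formally.

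The single-factor expansion can be handled the same way. The derivative of $-\sum_k \rho_i^{-k}x^k/k$ is $-\rho_i^{-1}\sum_{k\ge0}(x/\rho_i)^k$, which equals $-\rho_i^{-1}/(1-x/\rho_i)$. That is precisely the logarithmic derivative of $1-x/\rho_i$.

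I expect no serious obstacle. The main thing to be explicit about is that everything happens in $\mathbb{C}[[x]]$ with no convergence issues. Each coefficient involves only finitely many terms, namely $n$ roots, so rearranging the sums is legitimate.
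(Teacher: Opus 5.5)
Your proposal is correct and follows the same route as the paper: factor $P_n$ via Lemma~\ref{lem:factor}, expand each $\log(1-x/\rho_i)$ as $-\sum_k \rho_i^{-k}x^k/k$, and swap the finite sum over roots with the sum over $k$. Your additional justification of $\log(FG)=\log F+\log G$ through the logarithmic derivative makes explicit a step that the paper uses without comment.
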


\begin{proof}
Applying $\log(1 - u) = -\sum_{k=1}^{\infty} u^k/k$ term by term:
\begin{align*}
\log P_n(x) &= \sum_{i=1}^{n} \log\!\left(1 - \frac{x}{\rho_i}\right) = -\sum_{i=1}^{n} \sum_{k=1}^{\infty} \frac{1}{k} \left(\frac{x}{\rho_i}\right)^k \\
&= -\sum_{k=1}^{\infty} \frac{1}{k} \left(\sum_{i=1}^{n} \rho_i^{-k}\right) x^k = -\sum_{k=1}^{\infty} \frac{p_{-k}}{k}\, x^k. \qedhere
\end{align*}
\end{proof}

\begin{proof}[Proof of Theorem~\ref{thm:main}]
Let $h(x) = -\sum_{k=1}^{\infty} \frac{\alpha_k}{k}\, x^k$, so that $P_n(x) = [\exp(h(x))]_{\deg \leq n}$ by definition. In the ring of formal power series, $\exp$ and $\log$ are inverse operations on series with constant term $1$ and $0$ respectively \cite{StanleyEC2}. Therefore, extracting coefficients up to degree $n$,
\[
    [\log P_n(x)]_{\deg \leq n} = [\log[\exp(h(x))]_{\deg \leq n}]_{\deg \leq n} = [h(x)]_{\deg \leq n}.
\]

On the other hand, Lemma~\ref{lem:log} gives $\log P_n(x) = -\sum_{k=1}^{\infty} \frac{p_{-k}}{k}\, x^k$. Comparing coefficients of $x^k$ for $k = 1, \ldots, n$:

\[
    -\frac{p_{-k}}{k} = -\frac{\alpha_k}{k} \implies p_{-k} = \alpha_k,
\]

which is $\sum_{i=1}^n \rho_i^{-k} = \alpha_k$ for $k = 1, \ldots, n$.
\end{proof}

Theorem~\ref{thm:main} has two immediate consequences. First, it defines a natural embedding: the map $\Phi_n : \mathbb{C}^n \longrightarrow \{\text{multisets of } n \text{ points in } \mathbb{C}\}$ sending $(\alpha_1, \ldots, \alpha_n)$ to the roots of $P_n(x)$ is well-defined, and every finite sequence of complex numbers is realizable as the first $n$ negative power sums of a degree-$n$ polynomial. Second, the coefficients of $P_n$ are computable directly from the $\alpha_k$ via a simple recurrence.

\begin{proposition}[Coefficient recurrence]\label{prop:recurrence}
The coefficients $a_0, a_1, \ldots, a_n$ of $P_n(x)$ satisfy $a_0 = 1$ and 

\[
    a_k = -\frac{1}{k} \sum_{j=1}^{k} \alpha_j\, a_{k-j} \qquad \text{for } k = 1, \ldots, n.
\]

\end{proposition}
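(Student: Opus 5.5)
The plan is to differentiate the associated exponential and compare coefficients. Since the $a_j$ with $j \le n$ are by definition the coefficients $[x^j]\, f_{\boldsymbol{\alpha}}(x)$, it suffices to prove the recurrence for the full series
\[
    f_{\boldsymbol{\alpha}}(x) = \exp(-g_{\boldsymbol{\alpha}}(x)) = \sum_{j\ge 0} a_j x^j .
\]
Truncation to degree $n$ does not change $a_0,\ldots,a_n$. This matters because the recurrence only ever references coefficients of index at most $k \le n$, and those coefficients depend only on $\alpha_1,\ldots,\alpha_k$. So the statement holds regardless of how the $\alpha_j$ with $j>n$ are chosen.

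First, $a_0 = 1$, since $g_{\boldsymbol{\alpha}}$ has zero constant term and $\exp$ of a series with zero constant term has constant term $1$.

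Next, I will use the formal derivative. In $\mathbb{C}[[x]]$ the chain rule holds for $\exp$ composed with a series having zero constant term, so
\[
    f_{\boldsymbol{\alpha}}'(x) = -g_{\boldsymbol{\alpha}}'(x)\, f_{\boldsymbol{\alpha}}(x).
\]
Moreover, $g_{\boldsymbol{\alpha}}'(x) = \sum_{j\ge1} \alpha_j x^{j-1}$, because differentiating cancels the $1/k$ weights. Equivalently,
\[
    x f_{\boldsymbol{\alpha}}'(x) = -\Bigl(\sum_{j\ge1}\alpha_j x^j\Bigr) f_{\boldsymbol{\alpha}}(x).
\]

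Finally, I extract the coefficient of $x^k$ for $1\le k\le n$. On the left it is $k a_k$. On the right, the Cauchy product gives $-\sum_{j=1}^{k} \alpha_j a_{k-j}$. Dividing by $k$ yields the stated recurrence.

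The only step needing justification is the formal chain rule $\frac{d}{dx}\exp(h) = h'\exp(h)$ for $h(0)=0$. I would verify it directly by differentiating $\sum_m h^m/m!$ term by term. This is legitimate because only finitely many terms contribute to each coefficient, as $h^m$ has order at least $m$. The result is $\sum_m m h^{m-1}h'/m! = h'\exp(h)$.

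There is an alternative route using Theorem~\ref{thm:main}. By that theorem the roots of $P_n$ have negative power sums $\alpha_1,\ldots,\alpha_n$, and the $a_k$ are signed elementary symmetric functions of the $\rho_i^{-1}$, since $P_n(x)=\prod_i(1-x/\rho_i)$ by Lemma~\ref{lem:factor}. The recurrence is then exactly Newton's identities $k e_k = \sum_{j=1}^k (-1)^{j-1} e_{k-j} p_j$ after the sign change $a_k = (-1)^k e_k$. I would mention this as a consistency check, but the derivative argument is self-contained.
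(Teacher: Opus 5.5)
Your proof is correct and is essentially the paper's argument: differentiate $f=\exp(-g)$ to get $f'=-g'f$ and match coefficients. The only differences are that you multiply by $x$ and compare $x^k$ rather than $x^{k-1}$, and that you add justification the paper leaves implicit, namely the formal chain rule for $\exp$ and the fact that truncation does not change $a_0,\ldots,a_n$.
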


\begin{proof}
Differentiating $f = \exp(-g)$ gives $f' = -g'f$. With $g'(x) = \sum_{k=1}^{\infty} \alpha_k x^{k-1}$, expanding $f' = -g'f$ and matching coefficients of $x^{k-1}$ yields $k\, a_k = -\sum_{j=1}^{k} \alpha_j\, a_{k-j}$. Dividing by $k$ gives the result.
\end{proof}

This recurrence computes $P_n$ from $(\alpha_1, \ldots, \alpha_n)$ in $O(n^2)$ arithmetic operations.

\section{The Polylogarithm Family}\label{sec:polylog}

A natural and rich source of examples is the family $\alpha_k = k^{1-s}$ for $s \in \mathbb{C}$. In this case the log-generating function is

\[
    g(x) = \sum_{k=1}^{\infty} \frac{k^{1-s}}{k}\, x^k = \sum_{k=1}^{\infty} \frac{x^k}{k^s} = \mathrm{Li}_s(x),
\]

the polylogarithm of order $s$ \cite{LewinPolylog}. The associated exponential is therefore $f(x) = \exp(-\mathrm{Li}_s(x))$, and Theorem~\ref{thm:main} guarantees that truncating $f$ to degree $n$ yields a polynomial whose first $n$ negative power sums equal $k^{1-s}$ exactly. For non-positive integers $s = -m$, the polylogarithm reduces to a rational function \cite{LewinPolylog, EulerianNumbers}

\[
    \mathrm{Li}_{-m}(x) = \frac{x\, A_m(x)}{(1-x)^{m+1}},
\]

where $A_m(x)$ is the $m$-th Eulerian polynomial. Table~\ref{tab:polylog} summarizes the family for $s \in \{-3, \ldots, 3\}$.

\begin{table}[htbp]
\centering
\renewcommand{\arraystretch}{1.7}
\small
\begin{tabular}{c|c|c|l}
\hline
$s$ & $\alpha_k$ & $g(x) = \mathrm{Li}_s(x)$ & $f(x) = \exp(-g(x))$ \\
\hline
$-3$ & $k^4$ & $\dfrac{x(1+4x+x^2)}{(1-x)^4}$ & $1 - x - \dfrac{15}{2!}x^2 - \dfrac{115}{3!}x^3 - \dfrac{215}{4!}x^4 + \cdots$ \\[6pt]
$-2$ & $k^3$ & $\dfrac{x(1+x)}{(1-x)^3}$ & $1 - x - \dfrac{7}{2!}x^2 - \dfrac{31}{3!}x^3 - \dfrac{23}{4!}x^4 + \cdots$ \\[6pt]
$-1$ & $k^2$ & $\dfrac{x}{(1-x)^2}$ & $1 - x - \dfrac{3}{2!}x^2 - \dfrac{7}{3!}x^3 + \dfrac{1}{4!}x^4 + \cdots$ \\[6pt]
$0$  & $k$   & $\dfrac{x}{1-x}$     & $1 - x - \dfrac{1}{2!}x^2 - \dfrac{1}{3!}x^3 + \dfrac{1}{4!}x^4 + \cdots$ \\[6pt]
$1$  & $1$   & $-\log(1-x)$         & $1 - x$ \\[6pt]
$2$  & $1/k$ & $\mathrm{Li}_2(x)$   & $1 - x + \dfrac{1}{4}x^2 - \dfrac{1}{36}x^3 - \dfrac{1}{288}x^4 + \cdots$ \\[6pt]
$3$  & $1/k^2$ & $\mathrm{Li}_3(x)$ & $1 - x + \dfrac{3}{8}x^2 - \dfrac{17}{216}x^3 + \dfrac{29}{3456}x^4 + \cdots$ \\
\hline
\end{tabular}
\vspace{3mm}
\caption{The polylogarithm family $f(x) = \exp(-\mathrm{Li}_s(x))$ for integer $s$. The case $s = 1$ is exact: $f(x) = 1 - x$. For $s \leq 0$, coefficients are written as $a_n = b_n/n!$ with integer $b_n$.}
\label{tab:polylog}
\end{table}

The case $s = 1$ is degenerate in the best sense: since $\mathrm{Li}_1(x) = -\log(1-x)$, we get $f(x) = \exp(\log(1-x)) = 1 - x$ exactly, with single root $\rho = 1$ satisfying $\rho^{-k} = 1 = \alpha_k$ for all $k$. For $s \leq 0$, the factorial-scaled coefficients $b_n = n!\,a_n$ are integers. The sequences for $s = 0$ and $s = -1$ appear in the OEIS as A293116 and A318215, respectively \cite{OEIS}.

\subsection*{Detailed verification: the case $s = 0$}

We illustrate Theorem~\ref{thm:main} concretely for $\alpha_k = k$. The log-generating function is $g(x) = x/(1-x)$, giving
\[
    f(x) = \exp\!\left(-\frac{x}{1-x}\right) = 1 - x - \frac{x^2}{2} - \frac{x^3}{6} + \frac{x^4}{24} + \frac{19x^5}{120} + \cdots
\]
The first few truncations are
\begin{align*}
P_2(x) &= 1 - x - \tfrac{1}{2}x^2, & P_4(x) &= 1 - x - \tfrac{1}{2}x^2 - \tfrac{1}{6}x^3 + \tfrac{1}{24}x^4, \\
P_3(x) &= 1 - x - \tfrac{1}{2}x^2 - \tfrac{1}{6}x^3, & P_5(x) &= 1 - x - \tfrac{1}{2}x^2 - \tfrac{1}{6}x^3 + \tfrac{1}{24}x^4 + \tfrac{19}{120}x^5.
\end{align*}
Table~\ref{tab:verification} verifies the power sum property numerically: the entry $\sum_i \rho_i^{-k}$ for $P_n$ equals $\alpha_k = k$ exactly when $k \leq n$ (bold), and deviates for $k > n$, in precise agreement with Theorem~\ref{thm:main}.

\begin{table}[htbp]
\centering
\renewcommand{\arraystretch}{1.3}
\begin{tabular}{c|cccccc}
\hline
& $k=1$ & $k=2$ & $k=3$ & $k=4$ & $k=5$ & $k=6$ \\
\hline
$P_2$ & $\mathbf{1}$ & $\mathbf{2}$ & $2.50$ & $3.50$ & $4.75$ & $6.50$ \\
$P_3$ & $\mathbf{1}$ & $\mathbf{2}$ & $\mathbf{3}$ & $4.17$ & $6.00$ & $8.58$ \\
$P_4$ & $\mathbf{1}$ & $\mathbf{2}$ & $\mathbf{3}$ & $\mathbf{4}$ & $5.79$ & $8.21$ \\
$P_5$ & $\mathbf{1}$ & $\mathbf{2}$ & $\mathbf{3}$ & $\mathbf{4}$ & $\mathbf{5}$ & $7.26$ \\
\hline
Target $\alpha_k$ & $1$ & $2$ & $3$ & $4$ & $5$ & $6$ \\
\hline
\end{tabular}
\caption{Inverse power sums $\sum_i \rho_i^{-k}$ for the roots of $P_n(x)$ with $\alpha_k = k$. Bold entries indicate exact agreement with $\alpha_k$, occurring precisely for $k \leq n$.}
\label{tab:verification}
\end{table}

\begin{figure}[htbp]
\centering
\includegraphics[width=\textwidth]{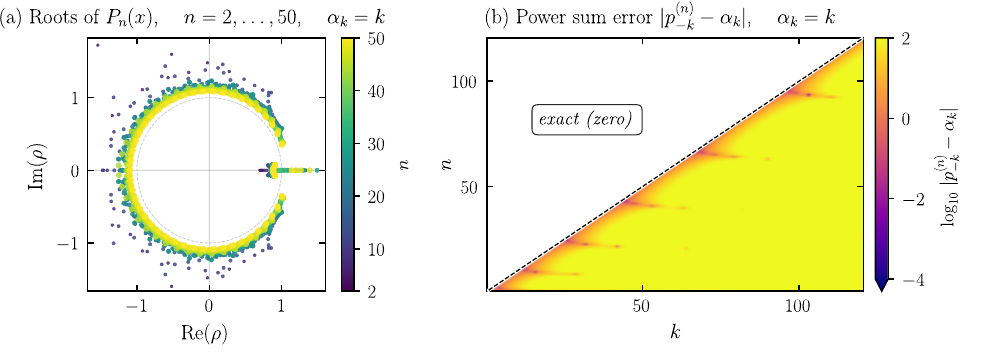}
\caption{(a) Roots of $P_n(x) = [\exp(-x/(1-x))]_{\deg \leq n}$ in the complex plane for $n = 2, \ldots, 50$, colored by $n$ (purple: small $n$, yellow: $n = 50$). (b) Heatmap of $\log_{10}|p_{-k}^{(n)} - \alpha_k|$ for $\alpha_k = k$, $n, k = 1, \ldots, 120$. White pixels (lower-left triangle, $k \leq n$) indicate errors below machine precision ($< 10^{-8}$), confirming Theorem~\ref{thm:main}.}
\label{fig:main}
\end{figure}

For $\mathrm{Re}(s) > 1$, the polylogarithm satisfies $\mathrm{Li}_s(1) = \zeta(s)$ \cite{TitchmarshZeta}, which gives a concrete evaluation of the limiting exponential.

\begin{corollary}\label{cor:zeta}
For $\mathrm{Re}(s) > 1$, 
\[
\lim_{n \to \infty} P_n^{(s)}(1) = \exp(-\zeta(s)).
\]
\end{corollary}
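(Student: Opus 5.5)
The plan is to identify $P_n^{(s)}(1)$ with a partial sum and show that the full power series converges absolutely at $x=1$. By definition, $P_n^{(s)}(1)=\sum_{j=0}^n a_j$, where $a_j=[x^j]\exp(-\mathrm{Li}_s(x))$. So the claim amounts to two facts. First, the Taylor series $\sum_{j\ge 0} a_j$ of $f(x)=\exp(-\mathrm{Li}_s(x))$ converges at $x=1$. Second, its sum equals $\exp(-\mathrm{Li}_s(1))=\exp(-\zeta(s))$.

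Write $\sigma=\mathrm{Re}(s)>1$. Then the coefficients of $\mathrm{Li}_s(x)=\sum_k k^{-s}x^k$ satisfy $|k^{-s}|=k^{-\sigma}$, and these are summable with sum $\zeta(\sigma)<\infty$. So $\mathrm{Li}_s$ converges absolutely and uniformly on the closed disk $|x|\le 1$. It is therefore continuous there, and $\mathrm{Li}_s(1)=\zeta(s)$.

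The key step is a majorant for the $a_j$. Expand
\[
f(x)=\sum_{m\ge 0}\frac{(-1)^m}{m!}\,\mathrm{Li}_s(x)^m .
\]
Each coefficient $a_j$ is then a finite sum of products of the numbers $k^{-s}$ with rational weights $\pm 1/m!$. Replacing every $k^{-s}$ by $k^{-\sigma}$ and every sign by $+$ gives the coefficients $A_j$ of $F(x)=\exp(\mathrm{Li}_\sigma(x))$. Hence $|a_j|\le A_j$ for all $j$.

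This domination can also be read off Proposition~\ref{prop:recurrence} by induction. The $a_j$ satisfy $k a_k=-\sum_{j=1}^{k}\alpha_j a_{k-j}$ with $\alpha_j=j^{1-s}$. The $A_j$ satisfy the same recurrence with the minus sign dropped and $\alpha_j$ replaced by $|\alpha_j|=j^{1-\sigma}$, so the triangle inequality propagates $|a_k|\le A_k$ step by step.

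Next I would show $\sum_j A_j<\infty$. Since $A_j\ge 0$, the partial sums $\sum_{j\le n}A_j$ are increasing. For every $0<x<1$,
\[
\sum_{j\le n}A_j x^j\le F(x)\le \exp(\zeta(\sigma)),
\]
and letting $x\to 1^-$ gives $\sum_j A_j\le \exp(\zeta(\sigma))<\infty$. Consequently $\sum_j a_j$ converges absolutely, and the power series of $f$ converges absolutely and uniformly on $|x|\le 1$, so it is continuous on the closed disk.

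Finally, for $|x|<1$ the power series of $f$ equals $\exp(-\mathrm{Li}_s(x))$. Both sides are continuous up to $x=1$: the left side by the uniform convergence just established, the right side by the continuity of $\mathrm{Li}_s$ and $\exp$. Letting $x\to1^-$ (this is Abel's theorem, here trivial because of absolute convergence) gives
\[
\lim_{n\to\infty}P_n^{(s)}(1)=\sum_{j\ge0}a_j=\exp(-\mathrm{Li}_s(1))=\exp(-\zeta(s)).
\]

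The main obstacle is justifying convergence of $\sum a_j$ at the boundary point $x=1$. Convergence of $\mathrm{Li}_s(1)$ alone does not give this, since exponentiating could in principle destroy boundary convergence. The majorant $|a_j|\le A_j$ together with the positivity bound on $\sum A_j$ is what settles it. I would verify the majorant carefully via the recurrence, since that is the least formal-looking step.
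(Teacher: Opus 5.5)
Your proof is correct. It follows the same skeleton as the paper's: $P_n^{(s)}(1)$ is the $n$-th partial sum of the Taylor series of $\exp(-\mathrm{Li}_s(x))$ at $x=1$, and $\mathrm{Li}_s(1)=\zeta(s)$.

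Where you differ is that you actually justify the passage to the limit. The paper's argument is only ``$P_n^{(s)}$ converges coefficient-wise to $f$; evaluate at $x=1$.'' But coefficient-wise convergence is automatic for truncations, since the coefficient of $x^j$ equals $a_j$ once $n\ge j$. It gives no control over the tail $\sum_{j>n}a_j$ at $x=1$, a point on the unit circle. As you note, convergence of $\mathrm{Li}_s(1)$ alone does not settle this.

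Your majorant $|a_j|\le A_j=[x^j]\exp(\mathrm{Li}_\sigma(x))$ is precisely the missing ingredient. You derive it either from the expansion of the exponential or inductively from Proposition~\ref{prop:recurrence}, and combine it with the positivity bound $\sum_j A_j\le\exp(\zeta(\sigma))$. This gives absolute and uniform convergence on the closed unit disk. Continuity (Abel's theorem in its trivial form) then identifies the sum with $\exp(-\zeta(s))$.

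The paper's version is shorter, but it leaves the interchange of limit and evaluation at $x=1$ unjustified. Yours is complete. As a by-product it yields the explicit bound $|P_n^{(s)}(1)-\exp(-\zeta(s))|\le\sum_{j>n}A_j$, a natural starting point for the convergence-rate question raised in Section~\ref{sec:discussion}.
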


\begin{proof}
As $n \to \infty$, the truncated polynomial $P_n^{(s)}(x)$ converges coefficient-wise to $f(x) = \exp(-\mathrm{Li}_s(x))$. Evaluating at $x = 1$ and using $\mathrm{Li}_s(1) = \zeta(s)$ gives the result.
\end{proof}

Thus the truncated polynomials encode values of the Riemann zeta function through their evaluation at $x = 1$, providing a polynomial approximation scheme for $\exp(-\zeta(s))$.

\section{Discussion}\label{sec:discussion}

Theorem~\ref{thm:main} establishes a fundamental algebraic identity: the degree-$n$ truncation of $\exp(-g_{\boldsymbol{\alpha}}(x))$ produces a polynomial whose roots satisfy the first $n$ negative power sum constraints exactly. This is not an approximation result but a consequence of the formal power series identity $[\log[\exp(h)]_{\leq n}]_{\leq n} = [h]_{\leq n}$. The associated embedding $\Phi_n : \mathbb{C}^n \to \{\text{multisets in } \mathbb{C}\}$ shows that every finite sequence of complex numbers is realizable as negative power sums of a degree-$n$ polynomial, computable in $O(n^2)$ operations via Proposition~\ref{prop:recurrence}.

The polylogarithm family $\alpha_k = k^{1-s}$ illustrates the theorem concretely across a wide range of behavior: from the trivial case $s = 1$ (single root $\rho = 1$) to the factorial-integer sequences arising for $s \leq 0$, to the zeta value encoding of Corollary~\ref{cor:zeta}. The heatmap in Figure~\ref{fig:main}(b) makes the theorem visible at scale: a perfectly sharp triangular region of machine-zero error, bounded by the diagonal $k = n$.

Several directions remain open. First, the root distribution question: numerical evidence (Figure~\ref{fig:main}(a)) suggests that roots of $P_n(x)$ accumulate on a smooth curve in $\mathbb{C}$ as $n \to \infty$; a rigorous description of this limit curve is open. Second, the factorial-scaled integer sequences $b_n = n!\,a_n$ arising for $s \leq 0$ appear to be new for $s \leq -2$; their combinatorial interpretation and asymptotic growth rates are unresolved. Third, the rate of convergence of $P_n^{(s)}(1)$ to $\exp(-\zeta(s))$ for $s$ near the critical strip deserves a precise analytic treatment.

\bibliographystyle{amsplain}

\begin{thebibliography}{10}

\bibitem{EulerianNumbers}
T.~K. Petersen, \emph{Eulerian Numbers}, Birkh\"auser, New York, 2015.

\bibitem{HardyWright}
G.~H. Hardy and E.~M. Wright, \emph{An Introduction to the Theory of Numbers}, 6th ed., Oxford University Press, Oxford, 2008.

\bibitem{LangAlgebra}
S.~Lang, \emph{Algebra}, 3rd ed., Springer, New York, 2002.

\bibitem{LewinPolylog}
L.~Lewin, \emph{Polylogarithms and Associated Functions}, North-Holland, New York, 1981.

\bibitem{MacdonaldSymmetric}
I.~G. Macdonald, \emph{Symmetric Functions and Hall Polynomials}, 2nd ed., Oxford University Press, Oxford, 1995.

\bibitem{NewtonIdentities}
I.~Newton, \emph{Arithmetica Universalis}, Typis Academicis, Cambridge, 1707; English translation by J.~Raphson, London, 1720.

\bibitem{OEIS}
N.~J.~A. Sloane, \emph{The On-Line Encyclopedia of Integer Sequences}, \url{https://oeis.org}, 2024.

\bibitem{Plethystic}
B.~Feng, A.~Hanany, and Y.-H. He, Counting gauge invariants: the plethystic program, \emph{J. High Energy Phys.} \textbf{2007} (2007), no.~03, 090.

\bibitem{StanleyEC2}
R.~P. Stanley, \emph{Enumerative Combinatorics, Vol.~2}, Cambridge University Press, Cambridge, 1999.

\bibitem{TitchmarshZeta}
E.~C. Titchmarsh, \emph{The Theory of the Riemann Zeta-Function}, 2nd ed., Oxford University Press, Oxford, 1986.

\end{thebibliography}

\end{document}